\documentclass[11pt,%tightenlines,%
twoside,%
onecolumn,%
nofloats,%
nobibnotes,%
nofootinbib,%
superscriptaddress,%
noshowpacs,%
centertags]%
{revtex4}
\usepackage{ljm1}
\usepackage{enumerate}
\usepackage{upgreek}

%%\usepackage[pdftex,unicode,colorlinks,linkcolor=blue,
%%citecolor=red,bookmarksopen,pdfhighlight=/N]{hyperref}

\theoremstyle{definition}
\newtheorem{remark}{Remark} 
%%%\numberwithin{equation}{section}
%-------------------------------------------------
%%%\theoremstyle{definition}

% for running heads
% % for running heads %for running heads

%%%\numberwithin{equation}{section}
%-------------------------------------------------

\textheight 220mm

\begin{document}
\titlerunning{Integrals of the difference of subharmonic functions\dots}
\authorrunning{B. N. Khabibullin}

\title{Integrals of the difference of subharmonic functions\\ over discs and planar small sets}
%%%%\subtitle{If so, write it here}
%%%\titlerunning{Short form of title}        % if too long for running head

\author{\firstname{B. N.}~\surname{Khabibullin}}
\email[E-mail: ]{khabib-bulat@mail.ru } 
\affiliation{Bashkir State University, Zaki Validi Str. 32, Bashkortostan, 420076 Russian Federation}

%%\author{\firstname{E. B.}~\surname{Menshikova}}
%%\email[E-mail: ]{algeom@bsu.bashedu.ru } 
%%\affiliation{Bashkir State
%%University, Zaki Validi Str. 32, Bashkortostan, 420076 Russian Federation}

%%\firstcollaboration{(Submitted by A.M. Elizarov)}

\received{November 25, 2020}

\begin{abstract}
The maximum of the modulus of a meromorphic function cannot be restricted from above by the Nevanlinna characteristic of this meromorphic function. But integrals from the logarithm of the module of a meromorphic function allow similar restrictions from above. This is illustrated by one of the important  theorems of Rolf Nevanlinna in the classical monograph by A. A. Goldberg and I. V. Ostrovskii on meromorphic functions, as well as by the Edrei\,--\,Fuchs Lemma on small arcs and its versions for small intervals in articles by  A.\,F.~Grishin, M.\,L.~Sodin, T.\,I.~Malyutina.  Similar results for integrals of differences of subharmonic functions even with weights were recently obtained by B.N. Khabiblullin, L.A. Gabdrakhmanova. All these results are on  integrals over subsets on a ray. In this article, we establish such results for integrals of the logarithm of the modulus of a meromorphic function and the difference of subharmonic functions over discs and planar small sets. Our estimates are uniform in the sense that the constants in these estimates are explicitly written out and  do not depend on meromorphic functions and  the difference of subharmonic functions  provided that these functions has an integral normalization near zero.
\end{abstract}

\subclass{30D35, 31A05, 30D20}

\keywords%%%%{Keywords and phrases}:
{\it  meromorphic function, Nevanlinna characteristic, subharmonic function, $\delta$-subharmonic function, Riesz measure}

\maketitle

\section{Introduction. Definitions and notation}
Upper estimates of integrals for meromorphic functions in the {\it complex plane\/} ${\mathbb{C}}$ over segments or small sets on a ray via the Nevanlinna characteristic are considered in works \cite[pp. 24--27]{RNevanlinna}, \cite[Notes, Ch. 1]{GOe}, 
\cite[Ch.~1, Theorem~7.2]{GOe}, \cite[Lemma~3.1]{GrS}, \cite[Theorem~8]{GrM}, 
\cite[Theorem~1 (on small intervals), Remark~1.1, Conclusion of the Grishin\,--\,Malyutina  Theorem]{GabKha20}, 
\cite[Main Theorem]{KhaMst}. Our article \cite{KhaMst} contains all these listed results as special cases (see  Theorem \ref{th1l} below). We give in \cite[Inrtoduction]{GabKha20} and \cite[1.1--1.2]{KhaMst} a detailed history of the issue with full formulations of all previous results. In this paper, we obtain similar upper estimates  for integrals already over discs or small planar sets. As in [1], we establish the main theorem of the paper in a more general subharmonic version. Let's move on to precise definitions.

As usual, %%$\mathbb N:=\{1,2, \dots\}$ is  set of all {\it natural\/} numbers, 
$\mathbb R$  is the \textit{real line,\/} or the {\it real axis\/} of the complex plane $\mathbb C$, and  ${\mathbb{R}}^+:=\{r\in {\mathbb{R}}\colon 0\leq r\}$ is the {\it positive closed semiaxis.\/}  We denote singleton sets by a symbol without curly brackets. So,  ${\mathbb{R}}^+\!\setminus\!0$ is the {\it positive open semiaxis,\/}
$\overline {\mathbb{R}}$   is the {\it extended real axis.\/}
Besides, $ D(z,r):=\{z' \in {\mathbb{C}} \colon |z'-z|<r\}$ is an {\it open disc,\/} $\overline  D(z,r):=\{z' \in {\mathbb{C}} \colon |z'-z|\leq r\}$ 
is a {\it closed disc,} 
$\partial \overline D(z,r):=\{z' \in {\mathbb{C}} \colon |z'-z|=r\}$ is a  
{\it circle with center\/ $z\in {\mathbb{C}}$ of radius\/ $r\in \overline {\mathbb{R}}^+$;}  $D(z,0)=\varnothing$, $\overline  D(z,0)=\partial \overline D(z,0)=z$,  $D(z,+\infty)={\mathbb{C}}$, $D(r):=D(0,r), \overline D(r):= \overline D(0,r)$, 
$\partial \overline D(r):=\partial \overline D(0,r)$. 

Given a function  $f\colon X\to {\overline {\mathbb{R}}}$,    $f^+:=\sup\{0,f\}$ and  $f^-:=(-f)^+$ are  \textit{positive\/} and \textit{negative  parts\/} of function $f$, respectively; $|f|:=f^++f^-$.  

Given  $S\subset {\mathbb{C}}$, ${\sf sbh}(S)$ is the class of  all {\it subharmonic\/}  on an open  neighbourhood of $S$.  We set   ${\sf sbh}_*(S):=\bigl\{v\in {\sf sbh}(S)\colon v\not\equiv -\infty\bigr\}$.

By $\uplambda$ we denote the {\it planar Lebesgue measure\/} on ${\mathbb{C}}$. We also use the notation ${\rm mes} $ 
for the {\it linear Lebesgue measure\/} on ${\mathbb{R}}$. 

For $ r\in {\mathbb{R}}^+ $ and a function $ v\colon \partial \overline D(r) \to {\overline {\mathbb{R}}}$, we define
\begin{subequations}\label{Cu}
\begin{align}
{\sf M}_v(r)&:=\sup_{|z|=r}v(z),\quad r\in {\mathbb{R}}^+,  
\tag{\ref{Cu}M}\label{{Cu}M}\\
{\sf C}_v(r)&:=\frac{1}{2\pi}\int_0^{2\pi} v(re^{i\varphi}){\,{\rm d}} \varphi ,  \quad r\in {\mathbb{R}}^+\!\setminus\!0,
\tag{\ref{Cu}C}\label{{Cu}C}
\end{align}
\end{subequations}
where ${\sf C}_v(r)$ is the {\it average over\/} the {\it circle\/} $\partial\overline D(0, r)$, if this integral exists.
For  a $\uplambda$-measurable  function $v\colon \overline D(r)\to \overline {\mathbb{R}}$, we use also the {\it average over\/} the {\it disc\/} $\overline D(r)$ 
 defined as 
\begin{equation}\label{Bu}
{\sf B}_v(r)
:=\frac{1}{\pi r^2}\int_{\overline D(r)}v {\,{\rm d}} \uplambda ,  
\quad {\sf B}_v(0):={\sf M}_v(0)\overset{\eqref{{Cu}M}}{=}v(0)=:{\sf C}_v(0).
\end{equation}
See \cite[2.6]{Rans}, \cite[2.7]{HK}, \cite[3]{BaiKhaKha17}  on properties of  ${\sf M}_v$, ${\sf C}_v $, ${\sf B}_v $  in the case of a subharmonic function $v$.

For a Borel subset $S\subset {\mathbb{C}}$, the set  of all Borel, or Radon,  positive measures $\mu\geq 0$  on $S$
is denoted by ${\sf Meas\,}^+(S)$, 
and ${\sf Meas\,}(S):={\sf Meas\,}^+(S)-{\sf Meas\,}^+(S)$ is the set  of all {\it charges,\/} or signed measures, on $S$.
For a charge $\nu\in {\sf Meas\,}(S)$, we denote by 
\begin{equation*}
\nu^+\in {\sf Meas\,}^+(S), \quad \nu^-:=(-\nu)^+\in {\sf Meas\,}^+(S), \quad |\nu|:=\nu^++\nu^-\in {\sf Meas\,}^+(S)
\end{equation*} 
the {\it upper, lower, total variations\/} of this charge $\nu$, respectively.      
%%For a measure  $\mu\in {\sf Meas\,}^+\bigl(\overline D(R)\bigr)$, 
We set
\begin{subequations}\label{murad}
\begin{flalign}
\nu(z, r)&:=\nu\bigl(\overline D(z,r)\bigr)\in \overline {\mathbb{R}} \quad\text{if } \overline D(z,r)\subset S,
\quad 0\leq r\leq R,
\tag{\ref{murad}z}\label{{murad}z}
\\
\nu^{{\text{\tiny\rm rad}}}(r)&:=\nu(0,r)=\nu\bigl(\overline D(r)\bigr)\in \overline{\mathbb{R}}  \quad\text{if }\overline D(r)\subset S,
\quad 0\leq r\leq R,
\tag{\ref{murad}r}\label{{murad}m}
\\
{\sf N}_{\nu}(r,R)&:=\int_{r}^{R}\frac{\nu^{{\text{\tiny\rm rad}}}(t)}{t}{\,{\rm d}} t\in \overline {\mathbb{R}}^+ 
 \quad\text{if } \overline D(R)\subset S,
\quad 0\leq r\leq R,
\tag{\ref{murad}N}\label{{murad}N}
\end{flalign}
\end{subequations} 
provided that the last integral is well defined. For $ 0\leq r\leq R\in {\mathbb{R}}^+ $ and  functions 
$$
v\colon \partial\overline D(r)\cup \partial\overline D(R)\to {\overline {\mathbb{R}}},
$$ we define
\begin{equation}\label{MCr}
{\sf C}_v(r,R)\overset{\eqref{{Cu}C}}{:=}{\sf C}_v(R)-{\sf C}_v(r)=\frac{1}{2\pi}\int_0^{2\pi} \bigl(v(Re^{i\varphi})-v(re^{i\varphi})\bigr){\,{\rm d}} \varphi 
\end{equation}
provided that ${\sf C}_v(R)$ and ${\sf C}_v(r)$ are well defined.

If $D\subset {\mathbb{C}}$ is a domain and  $u\in {\sf sbh}_*(D)$, then there is  its {\it Riesz measure\/} 
\begin{equation}\label{df:cm}
\varDelta_u:= \frac{1}{2\pi} {\bigtriangleup}  u\in {\sf Meas\,}^+( D), 
\end{equation}
where ${\bigtriangleup}$  is  the {\it Laplace operator\/}  acting in the sense of the  theory of distribution or generalized functions. This definition of the Riesz measures carries over naturally to $u\in {\sf sbh}_*(S)$ for connected subsets $S\subset {\mathbb{C}}$.
By the Poisson\,--\,Jensen\,--\,Privalov formula \cite{Rans}, \cite{HK},  we have
\begin{equation}\label{CN}
{\sf C}_v(r,R)={\sf N}_{\varDelta_v}(r,R)
\quad\text{for  all $0<r<R<+\infty$ if $v\in {\sf sbh}_*\bigl(\overline D(R)\bigr)$}.
\end{equation}

Let $U=u-v$ be a difference of subharmonic functions $u,v \in {\sf sbh}_*\bigl(\overline D(0, R)\bigr)$, i.\,e., a  {\it $\delta$-subharmonic non-trivial ($\not\equiv\pm\infty$) function\/} \cite{Arsove53}, \cite{Arsove53p}, \cite{Gr}, \cite[3.1]{KhaRoz18} on $\overline D(R)$ with the {\it Riesz charge\/}
\begin{equation*}
\varDelta_{U}\overset{\eqref{df:cm}}{:=}\varDelta_u-\varDelta_v\overset{\eqref{df:cm}}{:=}
\frac{1}{2\pi}{\bigtriangleup} u-\frac{1}{2\pi}{\bigtriangleup} v\in {\sf Meas\,} \bigl(\overline D(0, R)\bigr), \text{ and }
%%%\varDelta^+_U:=(\varDelta_U)^+ ОПЕЧАТКА
\varDelta^-_U:=(\varDelta_U)^-
\end{equation*} 
is the lower %%%upper ОПЕЧАТКА
variation of the Riesz charge  $\varDelta_U$ of $U$. 
Now we can determine the {\it difference  Nevanlinna characteristic\/} ${\sf T}$  of 
$\delta$-subharmonic non-trivial ($\not\equiv\pm\infty$) function $U$ as a function of two variables 
\begin{equation}\label{T0}
{\sf T}_U(r,R)%%:= ОПЕЧАТКА
%%%\overset{\eqref{CN}}{
:={\sf C}_{U^+}(r,R)+{\sf N}_%%%{\varDelta_U^+}(r,R), ОПЕЧАТКА
{\varDelta_U^-}(r,R), \quad 0<r\leq R\in {\mathbb{R}}^+. 
\end{equation}
A representation $U=u_U-v_U$ with $u_U,v_U\in {\sf sbh}_*\bigl(\overline D(0, R)\bigr)$ is {\it canonical\/} if the  Riesz measure $\varDelta_{u_U}$ of $u_U$ is  the {\it upper variation\/}  $\varDelta_U^+$ of $\varDelta_U$  and  the Riesz measure  $\varDelta_{v_U}$ of $v_U$ is  the {\it lower  variation\/}  $\varDelta_U^-:=(\varDelta_U)^-$ of $\varDelta_U$. 
The canonical representation for $U$ is defined up to the harmonic function added simultaneously to each of the representing subharmonic functions $u_U$ and $v_U$, and 
\begin{equation}\label{T}
{\sf T}_U(r,R)%%\overset{\eqref{T0}}{=}{\sf C}_{U^+}(r,R)+{\sf N}_{\varDelta_U^+}(r,R)
\overset{\eqref{CN},\eqref{T0}}{=}{\sf C}_{U^+}(r,R)+{\sf C}_{v_U}(r,R)={\sf C}_{\sup\{u_U,v_U\}}(r,R), 
\quad 0<r\leq R\in {\mathbb{R}}^+. 
\end{equation}
%%where $ 0<r\leq R\in {\mathbb{R}}^+$. ОПЕЧАТКА
By \eqref{T}, the difference Nevanlinna characteristic $T_U$ is already uniquely defined for all values $0<r\leq R<+\infty$  by positive values in ${\mathbb{R}}^+$, and is also  increasing and convex with respect to the logarithmic function $\ln$ in the second variable $R$, but is decreasing  in the first  variable $r\leq R$. 
Recall that the following notation is used for the meromorphic function
$F\not\equiv 0$  on ${\mathbb{C}}$ in the classic monograph by 
A. A. Goldberg and I. V. Ostrovskii \cite{GOe} for the maximum of module  
\begin{subequations}\label{TN}
	\begin{align}
	 M(r,F)&\underset{\text{\tiny$r\in{\mathbb{R}}^+$}}{:=}\sup\bigl\{\bigl|F(z)\bigr| \colon |z|=r\bigr\}, %%\quad r\in {\mathbb{R}}^+, 
	\tag{\ref{TN}M}\label{{TN}M}\\
	\intertext{and for  the {\it Nevanlinna characteristic}}
	T(r, F)&\underset{\text{\tiny $r\in{\mathbb{R}}^+$}}{:=}m(r,F)+N(r,F),  
	\tag{\ref{TN}T}\label{{TN}T}
	\\
	m(r,F)&\underset{\text{\tiny$r\in{\mathbb{R}}^+$}}{:=}\frac{1}{2\pi}\int_0^{2\pi} \ln^+\bigl|F(re^{i\varphi})\bigr| {\,{\rm d}} \varphi, 
	 \tag{\ref{TN}m}\label{{TN}m}\\
	N(r,F)&\underset{\text{\tiny $r\in{\mathbb{R}}^+$}}{:=}\int_{0}^{r}\frac{n(t,F)-n(0,F)}{t}{\,{\rm d}} t+n(0,F)\ln r, 
	\tag{\ref{TN}N}\label{{TN}N}
	\end{align}
\end{subequations} 
where   $n(r,F)$ is the number of poles of $F$ in the closed disc $\overline D(r):=\{z\in {\mathbb{C}}\colon |z|\leq r\}$, taking into account the multiplicity. The function $\ln |F|$ is non-trivial $\delta$-subharmonic on ${\mathbb{C}}$, and 
\begin{subequations}\label{cs}
\begin{align}
\ln M(r, F)&\overset{\eqref{{TN}M},\eqref{{Cu}M}}{=}{\sf M}_{\ln|F|}(r), \quad r\in {\mathbb{R}}^+,
\tag{\ref{cs}M}\label{{cs}M}\\  
m(r, f)&\overset{\eqref{{TN}m},\eqref{{Cu}C}}{=}{\sf C}_{\ln^+|F|}(r),\quad r\in {\mathbb{R}}^+,
\tag{\ref{cs}m}\label{{cs}m}\\
N(R, F)-N(r, F)&\overset{\eqref{{TN}N},\eqref{{murad}N}}{=}{\sf N}_%%{\varDelta_{\ln|F|}^+}(r,R), ОПЕЧАТКА
{\varDelta_{\ln|F|}^-}(r,R),
\quad 0<r<R\in {\mathbb{R}}^+,
\tag{\ref{cs}N}\label{{cs}N}\\
T(R, F)-T(r, F)&\overset{\eqref{{TN}T},\eqref{T0}}{=}{\sf T}_{\ln|F|}(r,R).
\quad 0<r<R\in {\mathbb{R}}^+,
\tag{\ref{cs}T}\label{{cs}T}
\end{align}
\end{subequations}

\section{Recent and new results}

Let us formulate the main result from \cite{KhaMst} for the ``one-dimensional'' subset $E \subset [0,r]\subset {\mathbb{R}}^+$ in the special case without a weight function-multiplier $g\in  L^p(E)$, $1<p\leq+\infty$, in the integrand.

\begin{theorem}[{\cite[Main Theorem]{KhaMst}}]\label{th1l} 
Let\/  $0< r_0< r<+\infty$, $1<k\in {\mathbb{R}}^+$,  $E\subset [0,r]$ be ${\rm mes}$-me\-a\-s\-u\-r\-a\-b\-le, 
$g\in L^p(E)$, where $1<p\leq \infty$ and $q\in [1,+\infty)$ is defined by $\dfrac{1}{p}+\dfrac{1}{q}=1$.
If $U\not\equiv \pm\infty$  is a non-trivial $\delta$-subharmonic   functions on ${\mathbb{C}}$, 
and $u\not\equiv -\infty$ is a subharmonic function on ${\mathbb{C}}$,  then 
\begin{subequations}\label{1l}
\begin{flalign}
\int_{E} {\sf M}_{U^+}(t)g(t){\,{\rm d}} t&\leq
\frac{4qk}{k-1} \bigl({\sf T}_{U}(r_0,kr)+{\sf C}_{U^+}(r_0)\bigr)\|g\|_{L^p(E)} \sqrt[q]{{\rm mes} \,E}
\ln\frac{4kr}{{\rm mes} \,E},
\tag{\ref{1l}T}\label{inDl+l}
\\ 
\int_{E} {\sf M}_{|u|}(t)g(t){\,{\rm d}} t& \leq
\frac{5qk}{k-1} \bigl({\sf M}_{u^+}(kr)+{\sf C}_{u^-}(r_0)\bigr) \sqrt[q]{{\rm mes} \,E}
\ln\frac{4kr}{{\rm mes} \,E}.
\tag{\ref{1l}M}\label{uMl}
\end{flalign}
\end{subequations}
\end{theorem}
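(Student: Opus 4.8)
The plan is to majorise the circular maximum pointwise by means of the Poisson\,--\,Jensen\,--\,Privalov representation on the enlarged disc $\overline D(kr)$, and then to integrate this majorant against $g$ over $E$ using H\"older's inequality together with a symmetric--rearrangement (Edrei\,--\,Fuchs type) estimate for the logarithmic kernel. For \eqref{inDl+l}, fix $t\in(0,r]$ and a point $z_t$ with $|z_t|=t$ nearly attaining ${\sf M}_{U^+}(t)$. Writing $U=u_U-v_U$ in its canonical representation and applying Poisson\,--\,Jensen on $\overline D(kr)$, I discard the nonpositive contribution of $\varDelta_U^+$ to the Green potential and of the negative boundary values against the Poisson kernel, and use that the Poisson kernel for $|z|=t\le r$ is at most $\tfrac{kr+t}{kr-t}\le\tfrac{k+1}{k-1}$. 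This yields the majorant
\[
{\sf M}_{U^+}(t)\le \frac{k+1}{k-1}\,{\sf C}_{U^+}(kr)+\int_{\overline D(kr)}G_{kr}(z_t,\zeta)\,{\,{\rm d}}\varDelta_U^-(\zeta),\qquad G_{kr}\ge 0,
\]
where $G_{kr}$ is the Green function of $\overline D(kr)$. The two summands are then estimated separately.

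The boundary summand is already in the desired form. From \eqref{T0} and \eqref{MCr} one has ${\sf T}_U(r_0,kr)\ge{\sf C}_{U^+}(kr)-{\sf C}_{U^+}(r_0)$, hence ${\sf C}_{U^+}(kr)\le{\sf T}_U(r_0,kr)+{\sf C}_{U^+}(r_0)$. Integrating $\tfrac{k+1}{k-1}{\sf C}_{U^+}(kr)$ against $g$ over $E$ and using $\int_E g\le\|g\|_{L^p(E)}\sqrt[q]{{\rm mes}\,E}\le\|g\|_{L^p(E)}\sqrt[q]{{\rm mes}\,E}\,\ln\tfrac{4kr}{{\rm mes}\,E}$ (the logarithm exceeds $1$ since ${\rm mes}\,E\le kr$) absorbs this contribution into the right-hand side of \eqref{inDl+l}, with room to spare in the numerical constant.

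The Green-potential summand is the heart of the matter and the step I expect to be the main obstacle. I would move the $t$-integration inside the $\varDelta_U^-$-integration by Fubini\,--\,Tonelli, and for each $\zeta$ with $\rho:=|\zeta|$ use $G_{kr}(z_t,\zeta)\le\log^+\tfrac{2kr}{|z_t-\zeta|}\le\log^+\tfrac{2kr}{|t-\rho|}$ followed by H\"older in $t$. The $L^q(E)$-norm of the decreasing kernel $t\mapsto\log^+\tfrac{2kr}{|t-\rho|}$ is estimated by replacing $E$ with the centred interval of equal measure, which gives the factor $\sqrt[q]{{\rm mes}\,E}\,\ln\tfrac{4kr}{{\rm mes}\,E}$, the constant $q$ entering through the elementary bound on $\int_0^{({\rm mes}\,E)/2}\bigl(\log\tfrac{2kr}{s}\bigr)^q{\,{\rm d}} s$. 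The delicate point is that a bound uniform in $\rho$ returns only the raw mass $\varDelta_U^-(\overline D(kr))$, which is \emph{not} dominated by ${\sf N}_{\varDelta_U^-}(r_0,kr)$; I must therefore split off the mass responsible for the near-diagonal singularity, whose radial mass within $\overline D(r)$ satisfies $\varDelta_U^{-,{\text{\tiny\rm rad}}}(r)\le\tfrac1{\ln k}\,{\sf N}_{\varDelta_U^-}(r_0,kr)$ (integrate $\tfrac{\varDelta_U^{-,{\text{\tiny\rm rad}}}(t)}t$ over $[r,kr]$), and treat the remaining mass through the non-singular estimate $\log^+\tfrac{2kr}{\rho}$ and a radial integration by parts against $\varDelta_U^{-,{\text{\tiny\rm rad}}}$, which again produces ${\sf N}_{\varDelta_U^-}(r_0,kr)$. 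Since $\tfrac1{\ln k}\le\tfrac{k}{k-1}$ for $k>1$, combining this with the $4q$ from the H\"older/rearrangement step is exactly what produces the explicit constant $\tfrac{4qk}{k-1}$; keeping every constant explicit and uniform is the technical crux.

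For the subharmonic inequality \eqref{uMl} I would argue analogously after the pointwise splitting ${\sf M}_{|u|}(t)\le{\sf M}_{u^+}(t)+{\sf M}_{u^-}(t)$. Here $u^+$ is itself subharmonic, so Poisson\,--\,Jensen gives ${\sf M}_{u^+}(t)\le\tfrac{k+1}{k-1}{\sf C}_{u^+}(kr)\le\tfrac{k+1}{k-1}{\sf M}_{u^+}(kr)$ with no potential term, whereas applying the same representation to $u$ yields ${\sf M}_{u^-}(t)\le\tfrac{k+1}{k-1}{\sf C}_{u^-}(kr)+\int G_{kr}(z_t,\zeta){\,{\rm d}}\varDelta_u(\zeta)$, now with the full positive Riesz measure $\varDelta_u$ in the potential, which is handled exactly as above. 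Finally the averages and the potential mass are dominated by ${\sf M}_{u^+}(kr)+{\sf C}_{u^-}(r_0)$: by \eqref{CN} the function ${\sf C}_u$ is nondecreasing, so ${\sf C}_{u^-}(kr)\le{\sf M}_{u^+}(kr)+{\sf C}_{u^-}(r_0)$, and likewise ${\sf N}_{\varDelta_u}(r_0,kr)={\sf C}_u(kr)-{\sf C}_u(r_0)\le{\sf M}_{u^+}(kr)+{\sf C}_{u^-}(r_0)$. The additional boundary average ${\sf C}_{u^-}(kr)$, absent in the $\delta$-subharmonic case, is precisely what raises the numerical constant from $4$ to $5$.
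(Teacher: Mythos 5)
Your overall architecture --- a Poisson--Jensen majorant for ${\sf M}_{U^+}(t)$ on an enlarged disc, H\"older's inequality combined with a rearrangement bound for the $L^q(E)$-norm of the logarithmic kernel, and conversion of Riesz mass into ${\sf N}$-type quantities --- is the right one, and it matches the technique this paper deploys for its planar analogue (Lemmas \ref{lem1}--\ref{lem3}); note that Theorem \ref{th1l} itself is only quoted here from \cite{KhaMst}, so the comparison is necessarily with that machinery. Your handling of the boundary term, your derivation of \eqref{uMl} from the representation $0-u$ with the identities ${\sf C}_{u^-}(kr)\leq {\sf M}_{u^+}(kr)+{\sf C}_{u^-}(r_0)$ and ${\sf N}_{\varDelta_u}(r_0,kr)={\sf C}_u(kr)-{\sf C}_u(r_0)$, and your identification of the raw mass $\varDelta_U^-\bigl(\overline D(kr)\bigr)$ as the central obstruction are all correct.

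The gap is in how you dispose of that obstruction in the far field. After splitting off the near-diagonal mass, you propose to bound the remaining Green potential by $\ln^+\frac{2kr}{\rho}$ and integrate by parts radially to recover ${\sf N}_{\varDelta_U^-}(r_0,kr)$. This cannot work as stated: for $\rho$ close to $kr$ the kernel $\ln^+\frac{2kr}{\rho}$ stays above $\ln 2$, so mass of $\varDelta_U^-$ concentrated near the circle $|\zeta|=kr$ contributes on the order of its total amount, which is \emph{not} dominated by ${\sf N}_{\varDelta_U^-}(r_0,kr)$ --- a point mass at $|\zeta_0|=kr$ gives ${\sf N}_{\varDelta_U^-}(r_0,kr)=0$. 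What rescues the argument is the vanishing of the Green function at the outer boundary, which your crude bound $G_{kr}(z_t,\zeta)\leq\ln\frac{2kr}{|z_t-\zeta|}$ throws away: by Harnack applied to the positive harmonic function $w\mapsto G_{kr}(w,\zeta)$ on $D(\rho)$ one has $G_{kr}(z_t,\zeta)\leq\frac{\rho+t}{\rho-t}\ln\frac{kr}{\rho}$, and $\int\ln\frac{kr}{\rho}{\,{\rm d}}\varDelta_U^-(\zeta)$ over the outer annulus is at most ${\sf N}_{\varDelta_U^-}(\sqrt{k}r,kr)$ by Fubini. Alternatively --- and this is exactly how the paper proceeds in the planar case --- you can avoid splitting the measure altogether: apply Poisson--Jensen on the intermediate radius $\sqrt{k}r$, so that only the mass $(\varDelta_U^-)^{{\text{\tiny\rm rad}}}(\sqrt{k}r)$ enters the potential, and control it by Lemma \ref{lem2} via
\begin{equation*}
(\varDelta_U^-)^{{\text{\tiny\rm rad}}}(\sqrt{k}r)\overset{\eqref{N}}{\leq}\frac{\sqrt{k}}{\sqrt{k}-1}\,{\sf N}_{\varDelta_U^-}\bigl(\sqrt{k}r,kr\bigr)\leq \frac{2k}{k-1}\,{\sf T}_U(r_0,kr);
\end{equation*}
then a kernel bound uniform in $\rho$ suffices and the rest of your plan, including the H\"older/rearrangement step producing $q\sqrt[q]{{\rm mes}\,E}\ln\frac{4kr}{{\rm mes}\,E}$, goes through.
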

In particular,  by \eqref{{cs}M}--\eqref{{cs}T}   we have for meromorphic functions the following
\begin{corollary} Let, under the conditions of Theorem\/ {\rm \ref{th1l}}, $F\not\equiv 0$ be a meromorphic function, and $f\not\equiv 0$ be an entire function.
Then, in the traditional notation  \eqref{{TN}M}--\eqref{{TN}N}, we have
\begin{subequations}\label{1lf}
\begin{flalign}
\int_{E} \bigl(\ln^+ M(t,F)\bigr)g(t){\,{\rm d}} t&\leq
\frac{4qk}{k-1} \bigl( T(kr,F)-N(r_0,F)\bigr)\|g\|_{L^p(E)} \sqrt[q]{{\rm mes}\,E}
\ln\frac{4kr}{{\rm mes} \,E},
\tag{\ref{1l}T}\label{inDl+lf}
\\ 
\int_{E} \bigl|\ln M(t,f)\bigr|  %%{\sf M}_{|\ln |f||}(t)
g(t){\,{\rm d}} t& \leq
\frac{5qk}{k-1} \bigl(\ln^+ M(kr,f)+m(r_0,1/f )%%{\sf C}_{u^-}(r_0)
\bigr) \sqrt[q]{{\rm mes} \,E}
\ln\frac{4kr}{{\rm mes} \,E}.
\tag{\ref{1l}M}\label{uMlf}
\end{flalign}
\end{subequations}
\end{corollary}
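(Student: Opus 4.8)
The plan is to obtain both displays of the corollary by specializing Theorem~\ref{th1l} to the logarithmic potentials $U:=\ln|F|$ and $u:=\ln|f|$, and then translating every quantity through the correspondence \eqref{{cs}M}--\eqref{{cs}T}. Since $\ln|F|$ is non-trivial $\delta$-subharmonic on ${\mathbb{C}}$ and, $f$ being entire, $\ln|f|$ is subharmonic on ${\mathbb{C}}$, both functions are admissible inputs to \eqref{inDl+l} and \eqref{uMl}, respectively.

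For \eqref{inDl+lf} I would first rewrite the left-hand integrand. Because $t\mapsto\max(t,0)$ is nondecreasing, the supremum over a circle commutes with the positive part, so ${\sf M}_{U^+}(t)=\sup_{|z|=t}\bigl(\ln|F(z)|\bigr)^+=\bigl(\sup_{|z|=t}\ln|F(z)|\bigr)^+\overset{\eqref{{cs}M}}{=}\ln^+M(t,F)$; thus the left-hand sides of \eqref{inDl+l} and \eqref{inDl+lf} are literally equal. For the right-hand side I would use \eqref{{cs}T} to get ${\sf T}_U(r_0,kr)=T(kr,F)-T(r_0,F)$ and \eqref{{cs}m} to get ${\sf C}_{U^+}(r_0)={\sf C}_{\ln^+|F|}(r_0)=m(r_0,F)$; feeding in the decomposition $T(r_0,F)=m(r_0,F)+N(r_0,F)$ from \eqref{{TN}T} collapses ${\sf T}_U(r_0,kr)+{\sf C}_{U^+}(r_0)$ to $T(kr,F)-N(r_0,F)$, which is exactly the factor in \eqref{inDl+lf}.

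For \eqref{uMlf} the only step that is not a pure identity is the passage from $|\ln M(t,f)|$ to ${\sf M}_{|u|}(t)$. Writing $\ln M(t,f)={\sf M}_u(t)$, I would split into cases: if ${\sf M}_u(t)\geq 0$ then $|\ln M(t,f)|=\sup_{|z|=t}u(z)\leq\sup_{|z|=t}|u(z)|={\sf M}_{|u|}(t)$; if ${\sf M}_u(t)<0$ then $u<0$ on the whole circle $|z|=t$, whence $|\ln M(t,f)|=\inf_{|z|=t}|u(z)|\leq{\sf M}_{|u|}(t)$. The pointwise bound $|\ln M(t,f)|\leq{\sf M}_{|u|}(t)$ then dominates the left-hand integral of \eqref{uMlf} by that of \eqref{uMl} (using $g\geq 0$). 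Finally the right-hand factor is identified as before: the same supremum/positive-part identity gives ${\sf M}_{u^+}(kr)=\ln^+M(kr,f)$, and since $(\ln|f|)^-=\ln^+|1/f|$ one has ${\sf C}_{u^-}(r_0)={\sf C}_{\ln^+|1/f|}(r_0)\overset{\eqref{{cs}m}}{=}m(r_0,1/f)$, where $1/f$ is meromorphic; this yields the constant $\ln^+M(kr,f)+m(r_0,1/f)$.

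The only genuine obstacle is this last pointwise estimate $|\ln M(t,f)|\leq{\sf M}_{|u|}(t)$: it is the single place where an equality is replaced by an inequality, and it explains why the constant $5$ of \eqref{uMl} is inherited verbatim rather than sharpened. Every remaining manipulation is bookkeeping through the dictionary \eqref{{cs}M}--\eqref{{cs}T} between the subharmonic averages ${\sf M},{\sf C},{\sf N},{\sf T}$ and the classical Nevanlinna quantities $M,m,N,T$.
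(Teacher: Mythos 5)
Your proof is correct and follows essentially the same route as the paper: specialize Theorem~\ref{th1l} to $U=\ln|F|$ and $u=\ln|f|$ and translate through \eqref{{cs}M}--\eqref{{cs}T}, collapsing ${\sf T}_U(r_0,kr)+{\sf C}_{U^+}(r_0)$ to $T(kr,F)-N(r_0,F)$ and ${\sf M}_{u^+}(kr)+{\sf C}_{u^-}(r_0)$ to $\ln^+M(kr,f)+m(r_0,1/f)$. You are in fact more explicit than the paper on the one step that is not a pure identity, namely the pointwise bound $\bigl|\ln M(t,f)\bigr|\leq{\sf M}_{|u|}(t)$, which the paper dispatches with only a reference to \eqref{{cs}M}.
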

\begin{proof} By \eqref{{cs}M}--\eqref{{cs}T} we have for $U:=\ln |F|$ 
\begin{equation}\label{TT}
{\sf T}_{U}(r_0,kr)+{\sf C}_{U^+}(r_0)\overset{\eqref{{cs}T},\eqref{{cs}m}}{=}T(kr,F)-T(r_0,F)+
m(r_0,f)\overset{\eqref{{TN}T}}{=}T(kr,F)-N(r_0,F).
\end{equation}
We can replace the bracket on the right-hand side of \eqref{inDl+l} with the right side of these equalities. 
Then we obtain  \eqref{inDl+lf} by \eqref{{cs}M} for the integrand in the left-hand side of \eqref{inDl+l}. 

For $u:=\ln |f|$, we have 
\begin{equation}\label{MM}
{\sf M}_{u^+}(kr)+{\sf C}_{u^-}(r_0)\overset{\eqref{{cs}M}}{=}\ln^+ M(kr,f)+
{\sf C}_{\ln^-|f|}(r_0)\overset{\eqref{{cs}m}}{=}\ln^+ M(kr,f)+m(r_0,1/f ). 
\end{equation}
We can replace the bracket on the right-hand side of \eqref{uMl} with the right side of these equalities. 
Then we obtain  \eqref{uMlf} by \eqref{{cs}M} for the integrand in the left-hand side of \eqref{uMl}. 
\end{proof}
\begin{remark}
Our elementary example \cite[1.1, (3)]{KhaMst} shows that it is impossible to discard the terms $-N(r_0,F)$ and 
$m(r_0,1/f )$ in parentheses on the right-hand sides of inequality \eqref{inDl+lf} and \eqref{uMlf}, respectively.
In particular, the classical Rolf Nevanlinna Theorem \cite[Ch.~1, Theorem 7.2]{GOe}  can be formulated in the following correct form: {\it for each meromorphic function   $F\not\equiv 0$ and   $1<k\in {\mathbb{R}}^+$}
\begin{equation}
\frac{1}{r}\int_0^r \ln^+ M(t,F){\,{\rm d}} t\leq
\frac{4k\ln 4k}{k-1} \bigl( T(kr,F)-N(r_0,F)\bigr)\quad \text{\it for all\/ $0<r_0\leq r\in {\mathbb{R}}^+$.}
 \end{equation}
Indeed, it is sufficient to choose $E:=[0,r]$, $g\equiv 1$ with ${\rm mes} \,E=r$, and $p:=\infty$ with $q=1$ in \eqref{inDl+lf}.
\end{remark}
Our main result is established for the case of a planar ``two-dimensional'' subset 
$E\subset \overline D(r)\subset {\mathbb{C}}$:
\begin{theorem}\label{th1} 
Let  $0< r_0< r<+\infty$, $1<k\in {\mathbb{R}}^+$,  $E\subset \overline D(r)$ be a $\uplambda$-measurable subset. If  
$U\not\equiv \pm\infty$  is a $\delta$-subharmonic  functions on ${\mathbb{C}}$, 
and $u\not\equiv -\infty$ is a subharmonic function on ${\mathbb{C}}$,  then 
\begin{subequations}\label{1}
\begin{flalign}
\int_{E} U^+{\,{\rm d}} \uplambda \leq
\frac{2k}{k-1} \bigl({\sf T}_{U}(r_0,kr)+{\sf C}_{U^+}(r_0)\bigr)\uplambda(E)
\ln\frac{100kr^2}{\uplambda (E)},
\tag{\ref{1}T}\label{inDl+}
\\ 
\int_{E} |u|{\,{\rm d}} \uplambda \leq
\frac{3k}{k-1} \bigl({\sf M}_{u^+}(kr)+{\sf C}_{u^-}(r_0)\bigr) \uplambda(E) \ln\frac{100kr^2}{\uplambda (E)}.
\tag{\ref{1}M}\label{uM}
\end{flalign}
\end{subequations}
\end{theorem}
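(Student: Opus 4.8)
The plan is to prove both inequalities by a single scheme resting on the Poisson\,--\,Jensen\,--\,Privalov representation on an auxiliary disc $\overline D(R)$ with $r<R<kr$, the radius $R$ being fixed only at the very end. For a subharmonic $w$ on $\overline D(R)$ and $|z|\le r$ one has $w(z)={\sf P}_R[w](z)-\int_{\overline D(R)}G_R(z,\zeta){\,{\rm d}}\varDelta_w(\zeta)$, where ${\sf P}_R[w]$ is the Poisson integral of the boundary values over $\partial\overline D(R)$ and $G_R(z,\zeta)=\ln\bigl|(R^2-\overline\zeta z)/(R(z-\zeta))\bigr|\ge0$ is the Green function of $D(R)$. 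Applying this to $u$ for \eqref{uM}, and to the canonical pair $u_U,v_U$ (whose Riesz measures are $\varDelta_U^+,\varDelta_U^-$) followed by subtraction for \eqref{inDl+}, and using $G_R\ge0$, I split each integrand into a boundary part coming from ${\sf P}_R$ and a potential part coming from the Riesz mass. For \eqref{uM} I write $|u|=u^++u^-$ and estimate the two pieces; for \eqref{inDl+} I use $U^+(z)\le{\sf P}_R[U^+](z)+\int_{\overline D(R)}G_R(z,\zeta){\,{\rm d}}\varDelta_U^-(\zeta)$, which follows because $U={\sf P}_R[U]-\int G_R{\,{\rm d}}\varDelta_U^++\int G_R{\,{\rm d}}\varDelta_U^-$ and ${\sf P}_R[U]\le{\sf P}_R[U^+]$.

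For the boundary part I use the two-sided Poisson-kernel bound $\frac{R-r}{R+r}\le\frac{R^2-|z|^2}{|Re^{i\theta}-z|^2}\le\frac{R+r}{R-r}$, valid for $|z|\le r$, which yields the pointwise estimates $u^+(z)\le\frac{R+r}{R-r}{\sf C}_{u^+}(R)$, ${\sf P}_R[u^-](z)\le\frac{R+r}{R-r}{\sf C}_{u^-}(R)$, and $U^+(z)\le\frac{R+r}{R-r}{\sf C}_{U^+}(R)$ up to the nonnegative potential. The averages at radius $R$ are then replaced by the normalizations of the statement via \eqref{CN}, \eqref{T0} and monotonicity of the characteristics. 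In the subharmonic case ${\sf C}_{u^+}(R)\le{\sf M}_{u^+}(kr)$ (as ${\sf M}_{u^+}$ is nondecreasing and $R\le kr$), and from ${\sf C}_{u^-}={\sf C}_{u^+}-{\sf C}_u$ with ${\sf C}_u(R)\ge{\sf C}_u(r_0)$ (because ${\sf C}_u(R)-{\sf C}_u(r_0)={\sf N}_{\varDelta_u}(r_0,R)\ge0$) one gets ${\sf C}_{u^-}(R)\le{\sf M}_{u^+}(kr)+{\sf C}_{u^-}(r_0)$. In the $\delta$-subharmonic case ${\sf C}_{U^+}(R)={\sf C}_{U^+}(r_0)+{\sf T}_U(r_0,R)-{\sf N}_{\varDelta_U^-}(r_0,R)\le{\sf C}_{U^+}(r_0)+{\sf T}_U(r_0,kr)$, since ${\sf T}_U$ increases in its second argument and ${\sf N}_{\varDelta_U^-}\ge0$. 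Thus every boundary part is at most $\frac{R+r}{R-r}$ times the required normalization times $\uplambda(E)$, with no logarithmic factor.

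The potential part produces the factor $\ln\frac{100kr^2}{\uplambda(E)}$ and is the analytic core. By Tonelli, $\int_E\int_{\overline D(R)}G_R(z,\zeta){\,{\rm d}}\varDelta(\zeta){\,{\rm d}}\uplambda(z)=\int_{\overline D(R)}\bigl(\int_E G_R(z,\zeta){\,{\rm d}}\uplambda(z)\bigr){\,{\rm d}}\varDelta(\zeta)$ for $\varDelta\in\{\varDelta_u,\varDelta_U^-\}$, so I need a bound on the inner integral uniform in $\zeta$. Since $|R^2-\overline\zeta z|\le2R^2$ for $|z|\le r\le R$, one has $G_R(z,\zeta)\le\ln\frac{2R}{|z-\zeta|}$, and as $z\mapsto\ln\frac{2R}{|z-\zeta|}$ is radially decreasing about $\zeta$, the Hardy\,--\,Littlewood rearrangement inequality bounds $\int_E\ln\frac{2R}{|z-\zeta|}{\,{\rm d}}\uplambda(z)$ by its value on the concentric disc $\overline D(\zeta,\rho)$ with $\pi\rho^2=\uplambda(E)$; evaluating the latter gives exactly $\frac{\uplambda(E)}{2}\ln\frac{4\pi eR^2}{\uplambda(E)}$. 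The total mass is controlled by monotonicity of $t\mapsto\varDelta(\overline D(t))$: from ${\sf N}_{\varDelta}(R,kr)\ge\varDelta(\overline D(R))\ln\frac{kr}{R}$ we get $\varDelta(\overline D(R))\le{\sf N}_{\varDelta}(r_0,kr)/\ln\frac{kr}{R}$, where ${\sf N}_{\varDelta_u}(r_0,kr)={\sf C}_u(kr)-{\sf C}_u(r_0)\le{\sf M}_{u^+}(kr)+{\sf C}_{u^-}(r_0)$ and, since ${\sf T}_U(r_0,kr)+{\sf C}_{U^+}(r_0)={\sf C}_{U^+}(kr)+{\sf N}_{\varDelta_U^-}(r_0,kr)$ with ${\sf C}_{U^+}(kr)\ge0$, also ${\sf N}_{\varDelta_U^-}(r_0,kr)\le{\sf T}_U(r_0,kr)+{\sf C}_{U^+}(r_0)$. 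Hence the potential part is at most $\frac{1}{\ln(kr/R)}\cdot\frac{\uplambda(E)}{2}\ln\frac{4\pi eR^2}{\uplambda(E)}$ times the normalization.

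It remains to fix $R$ and assemble constants. The symmetric choice $R=\sqrt{k}\,r$ makes $R^2=kr^2$, so that $4\pi eR^2=4\pi ekr^2<100kr^2$ — this is the source of the constant $100$ and gives $\ln\frac{4\pi eR^2}{\uplambda(E)}<\ln\frac{100kr^2}{\uplambda(E)}$ — and $\ln(kr/R)=\frac12\ln k$. Collecting the boundary factor $\frac{\sqrt k+1}{\sqrt k-1}$ and the potential factor $\frac{2}{\ln k}\cdot\frac12\ln\frac{4\pi ekr^2}{\uplambda(E)}$, and using $\ln\frac{100kr^2}{\uplambda(E)}\ge\ln\frac{100k}{\pi}>\ln\frac{100}{\pi}$ (since $\uplambda(E)\le\pi r^2$), one verifies that their total is dominated by $\frac{2k}{k-1}\ln\frac{100kr^2}{\uplambda(E)}$ for \eqref{inDl+}; for \eqref{uM} the boundary factor enters twice, through $u^+$ and $u^-$, giving the larger constant $\frac{3k}{k-1}$. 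I expect this final balancing to be the real obstacle: the three competing quantities — the Poisson factor $\frac{R+r}{R-r}$ (blowing up as $R\to r$), the mass factor $1/\ln(kr/R)$ (blowing up as $R\to kr$), and the rearrangement constant $4\pi eR^2$ — must be tuned through $R$ so that a handful of elementary but nonuniform estimates collapse to the single clean bound with the advertised constants, and the margin is tightest exactly at $k\to1$ with $\uplambda(E)$ near $\pi r^2$. The degenerate case $\uplambda(E)=0$ and the range in which the rearrangement disc has radius $\ge1$ are checked separately.
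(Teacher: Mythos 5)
Your architecture is essentially the paper's: Poisson--Jensen on the intermediate disc $\overline D(R)$ with $R=\sqrt{k}\,r$, discard the nonnegative Green potential of $\varDelta_U^+$ (resp.\ $\varDelta_u$ for $u^+$), bound the kernel by $\ln\frac{2R}{|z-\zeta|}$, apply Tonelli, and control the mass at radius $R$ by ${\sf N}$ over $[\sqrt{k}r,kr]$. Two of your ingredients differ and are in fact sharper than the paper's: the Hardy--Littlewood rearrangement gives $\frac{\uplambda(E)}{2}\ln\frac{4\pi eR^2}{\uplambda(E)}$ where the paper's split of $\int_0^{2R}\uplambda_E(z;t)\,t^{-1}{\,{\rm d}} t$ at $t=\sqrt{\uplambda(E)}$ gives $\frac{\uplambda(E)}{2}\ln\frac{4e^{\pi}R^2}{\uplambda(E)}$ (both fit under $(10R)^2$), and your bound $\varDelta^{{\text{\tiny\rm rad}}}(R)\le{\sf N}_{\varDelta}(R,kr)/\ln\frac{kr}{R}$ replaces the paper's Lemma~\ref{lem2}. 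For \eqref{inDl+} your final balancing does close: in the worst corner $k\to1^+$, $\uplambda(E)=\pi r^2$, the leading coefficient is $4+\ln(4e)\approx 6.386$ against $2\ln\frac{100}{\pi}\approx 6.921$, and the margin only grows elsewhere.

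There is, however, a genuine (if microscopic) failure in your assembly of \eqref{uM}. You route \emph{both} $u^+$ and $u^-$ through the Poisson kernel, so each contributes a factor $\frac{\sqrt k+1}{\sqrt k-1}$, and the required inequality becomes
\begin{equation*}
2\,\frac{\sqrt k+1}{\sqrt k-1}+\frac{1}{\ln k}\ln\frac{4\pi e kr^2}{\uplambda(E)}\;\le\;\frac{3k}{k-1}\ln\frac{100kr^2}{\uplambda(E)}.
\end{equation*}
At $\uplambda(E)=\pi r^2$ and $k=1+\varepsilon$ the two sides behave like $\bigl(8+\ln(4e)\bigr)/\varepsilon\approx 10.3863/\varepsilon$ and $3\ln\frac{100}{\pi}/\varepsilon\approx 10.3816/\varepsilon$ respectively, so the left side wins for $\varepsilon\lesssim 7\cdot10^{-4}$ (e.g.\ at $k=1.0003$ the inequality is false). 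You correctly predicted that the pinch point is $k\to1$ with $\uplambda(E)$ near $\pi r^2$, but as written your route does not survive it. The repair is exactly what the paper does: handle $u^+$ by the trivial monotonicity bound $\int_E u^+{\,{\rm d}}\uplambda\le{\sf M}_{u^+}(r)\uplambda(E)\le{\sf M}_{u^+}(kr)\uplambda(E)$, which costs a factor $1$ instead of $\frac{\sqrt k+1}{\sqrt k-1}$; the leading coefficient then drops to $4+\ln(4e)\approx 6.386$, well under $3\ln\frac{100}{\pi}$, and \eqref{uM} follows with room to spare. With that one substitution your proof is complete and correct.
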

Theorem  \ref{th1}  is proved at the end of Sec.~\ref{Sec3} after some preparation. 

We have not seen before these estimates \eqref{inDl+} and \eqref{uM}  even for the case $E = \overline D(r)$:
\begin{corollary}  If $U\not\equiv \pm\infty$  be a $\delta$-subharmonic  functions on ${\mathbb{C}}$, and 
$u\not\equiv -\infty$ be a subharmonic function on ${\mathbb{C}}$,  then
\begin{subequations}\label{10}
\begin{flalign}
{\sf B}_{U^+}(r) &\leq
\frac{7k\ln (ek)}{k-1} \bigl({\sf T}_{U}(r_0,kr)+{\sf C}_{U^+}(r_0)\bigr) 
\quad\text{for all\/ $0<r_0<r\in \mathbb R^+$  and $1<k\in {\mathbb{R}}^+$},
\tag{\ref{10}T}\label{inDl+0}
\\ 
{\sf B}_{|u|}(r) &\leq
\frac{11k\ln (ek)}{k-1} \bigl({\sf M}_{u^+}(kr)+{\sf C}_{u^-}(r_0)\bigr) \quad\text{for all\/ $0<r_0<r\in \mathbb R^+$  and $1<k\in {\mathbb{R}}^+$}.
\tag{\ref{10}M}\label{uM0}
\end{flalign}
\end{subequations}
 \end{corollary}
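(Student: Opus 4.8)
The plan is to apply Theorem~\ref{th1} to the single special subset $E:=\overline D(r)$, for which $\uplambda(E)=\pi r^2$, and then to normalize by dividing through by $\pi r^2$. With this choice the left-hand integrals in \eqref{inDl+} and \eqref{uM} become exactly the disc averages of definition~\eqref{Bu}. Substituting $E=\overline D(r)$ into \eqref{inDl+} gives
$$
\int_{\overline D(r)} U^+{\,{\rm d}} \uplambda \leq \frac{2k}{k-1}\bigl({\sf T}_{U}(r_0,kr)+{\sf C}_{U^+}(r_0)\bigr)\,\pi r^2\,\ln\frac{100kr^2}{\pi r^2},
$$
and dividing by $\pi r^2$ converts the left side into ${\sf B}_{U^+}(r)$ while collapsing the logarithmic factor to the $r$-independent quantity $\ln\frac{100k}{\pi}$. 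The identical substitution in \eqref{uM} produces the companion bound for ${\sf B}_{|u|}(r)$, now carrying the prefactor $\frac{3k}{k-1}$. Since $k>1$ forces $100k/\pi>1$, the logarithm is positive throughout and no sign difficulty arises.

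The remaining task is purely to absorb the constant $\ln\frac{100k}{\pi}$ into the advertised factor $\ln(ek)=1+\ln k$. I would establish the two elementary one-variable comparisons
$$
\ln\frac{100k}{\pi}\leq \frac{7}{2}\ln(ek)\qquad\text{and}\qquad \ln\frac{100k}{\pi}\leq \frac{11}{3}\ln(ek)\qquad\text{for all } k>1.
$$
For the first, set $h(k):=\frac{7}{2}(1+\ln k)-\ln\frac{100k}{\pi}=\bigl(\frac{7}{2}-\ln\frac{100}{\pi}\bigr)+\frac{5}{2}\ln k$. The constant term is positive (numerically $3.5-\ln(100/\pi)\approx 3.5-3.461>0$), and $h'(k)=\frac{5}{2k}>0$, so $h$ is positive and increasing on $(1,\infty)$, giving the bound. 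The second inequality is even more comfortable because $\frac{11}{3}>\frac{7}{2}$ enlarges both the constant term and the slope.

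Combining the two steps finishes the argument: the factor $\frac{2k}{k-1}\ln\frac{100k}{\pi}$ is dominated by $\frac{2k}{k-1}\cdot\frac{7}{2}\ln(ek)=\frac{7k\ln(ek)}{k-1}$, which is precisely the constant in \eqref{inDl+0}, and similarly $\frac{3k}{k-1}\ln\frac{100k}{\pi}\leq\frac{3k}{k-1}\cdot\frac{11}{3}\ln(ek)=\frac{11k\ln(ek)}{k-1}$ yields \eqref{uM0}. The only genuine obstacle is verifying that the chosen integer constants $7$ and $11$ are large enough, and this reduces to the single borderline check $\frac{7}{2}-\ln\frac{100}{\pi}>0$ near $k=1$; away from $k=1$ both comparisons gain slack monotonically, so nothing further is needed.
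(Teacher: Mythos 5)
Your proposal is correct and is essentially identical to the paper's own proof: both substitute $E:=\overline D(r)$ into Theorem~\ref{th1}, divide by $\uplambda(E)=\pi r^2$ to produce the disc averages of \eqref{Bu}, and absorb $\ln\frac{100k}{\pi}$ via the elementary bound $\ln\frac{100k}{\pi}\leq \frac{7}{2}\ln (ek)$ (with the $\frac{11}{3}$ variant handling the second inequality). Your explicit verification of the borderline constant $\frac{7}{2}-\ln\frac{100}{\pi}>0$ is a welcome detail the paper leaves implicit.
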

\begin{proof}%%[Proof of Corollary]
Let $E:=\overline D(r)$ in \eqref{1}. Then $\uplambda(E)=\pi r^2$,  
\begin{equation*}
\ln\frac{100kr^2}{\uplambda (E)}= \ln\frac{100k}{\pi}\leq \frac{7}{2}\ln ek, \quad 
{\sf B}_{U^+}(r)\overset{\eqref{Bu}}{=}\frac{1}{\uplambda(E)}\int_{E} U^+{\,{\rm d}} \uplambda,
\quad {\sf B}_{|u|}(r) \overset{\eqref{Bu}}{=}\frac{1}{\uplambda(E)}\int_{E} |u|{\,{\rm d}} \uplambda,
\end{equation*}
and by  \eqref{1} we obtain \eqref{10}.
\end{proof}
For a meromorphic function $F\not\equiv 0$ on ${\mathbb{C}}$, in the frame of  traditional notation \eqref{TN}, we denote
the average of $\ln^+ |F|$ over a  disc $\overline D(r)$  as 
\begin{equation}\label{BF}
{m^{[2]}}(r,F)\underset{r\in {\mathbb{R}}^+}{:=}\frac{2}{r^2}\int_0^r\left(\frac{1}{2\pi}\int_0^{2\pi}\ln^+ \bigl|F(te^{i\varphi})\bigr|{\,{\rm d}} \varphi \right)\, t{\,{\rm d}} t
\overset{\eqref{{Cu}C}}{=}\frac{2}{r^2}\int_0^r m(t,F)\, t\!{\,{\rm d}} t\overset{\eqref{Bu}}{=}{\sf B}_{\ln^+ |F|}(r).
\end{equation}

\begin{corollary}  Let, under the conditions of Theorem\/ {\rm \ref{th1}}, $F\not\equiv 0$ be a meromorphic function, and $f\not\equiv 0$ be an entire function.
Then, in the traditional notation  \eqref{{TN}M}--\eqref{{TN}N} and \eqref{BF}, we have
\begin{subequations}\label{1lfF}
\begin{flalign}
\int_{E} \ln^+ \bigl|F(z)\bigr|{\,{\rm d}} \uplambda(z)&\leq
\frac{2k}{k-1} \bigl( T(kr,F)-N(r_0,F)\bigr) \uplambda(E)
\ln\frac{100kr^2}{\uplambda (E)},
\tag{\ref{1lfF}T}\label{inDl+lfF}
\\ 
\int_{E} \Bigl|\ln \bigl|f(z)\bigr|\Bigr| {\,{\rm d}} \uplambda(z)& \leq
\frac{3k}{k-1} \bigl(\ln^+ M(kr,f)+m(r_0,1/f )%%{\sf C}_{u^-}(r_0)
\bigr) \ln\frac{100kr^2}{\uplambda (E)}.
\tag{\ref{1lfF}M}\label{uMlfF}
\\
{m^{[2]}}(r,F)&\leq
\frac{7k\ln (ek)}{k-1}\bigl( T(kr,F)-N(r_0,F)\bigr),
\tag{\ref{1lfF}F}\label{inDl+0l}
\\ 
{m^{[2]}}(r,f)+{m^{[2]}}(r,1/f) &\leq
\frac{11k\ln (ek)}{k-1} \bigl({\sf M}_{u^+}(kr)+{\sf C}_{u^-}(r_0)\bigr) 
%%\quad\text{for all\/ $0<r_0<r\in \mathbb R^+$  and $1<k\in {\mathbb{R}}^+$}.
\tag{\ref{1lfF}f}\label{uM0l}
\end{flalign}
\end{subequations}
for all\/ $0<r_0<r\in \mathbb R^+$  and $1<k\in {\mathbb{R}}^+$.
\end{corollary}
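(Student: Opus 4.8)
The plan is to obtain all four inequalities directly from Theorem~\ref{th1} and from the disc-average estimates \eqref{inDl+0}, \eqref{uM0} by the substitutions $U:=\ln|F|$ and $u:=\ln|f|$, in exact parallel with the way the segment-case Corollary (inequalities~\eqref{1lf}) was deduced from Theorem~\ref{th1l}. The whole analytic content is already packaged in the dictionary \eqref{cs} and in \eqref{BF}, so the proof is a translation rather than a new estimate.

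First I would handle \eqref{inDl+lfF} and \eqref{inDl+0l}. Put $U:=\ln|F|$, which is a non-trivial $\delta$-subharmonic function on ${\mathbb{C}}$. The identity \eqref{TT} already gives ${\sf T}_{U}(r_0,kr)+{\sf C}_{U^+}(r_0)=T(kr,F)-N(r_0,F)$, so I replace the bracket on the right-hand side of \eqref{inDl+} by $T(kr,F)-N(r_0,F)$; rewriting the integrand on the left via $U^+=\ln^+|F|$ from \eqref{{cs}M} yields \eqref{inDl+lfF}. The same substitution in \eqref{inDl+0}, together with ${\sf B}_{U^+}(r)={\sf B}_{\ln^+|F|}(r)=m^{[2]}(r,F)$ from \eqref{BF}, gives \eqref{inDl+0l}.

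Next I would handle \eqref{uMlfF} and \eqref{uM0l} with $u:=\ln|f|$, which is subharmonic on ${\mathbb{C}}$ because $f$ is entire. Here the identity \eqref{MM} supplies ${\sf M}_{u^+}(kr)+{\sf C}_{u^-}(r_0)=\ln^+M(kr,f)+m(r_0,1/f)$. Substituting into the right-hand side of \eqref{uM} and using $|u|=\bigl|\ln|f|\bigr|$ on the left delivers \eqref{uMlfF}. For \eqref{uM0l} I substitute into \eqref{uM0}; the left-hand side is ${\sf B}_{|u|}(r)$, and since $\bigl|\ln|f|\bigr|=\ln^+|f|+\ln^-|f|$ pointwise with $\ln^-|f|=\ln^+|1/f|$, linearity of the disc average and \eqref{BF} give ${\sf B}_{|u|}(r)=m^{[2]}(r,f)+m^{[2]}(r,1/f)$, which is the left-hand side of \eqref{uM0l}.

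There is essentially no obstacle in this corollary: every line is a substitution into an inequality already proved (namely \eqref{inDl+}, \eqref{uM}, \eqref{inDl+0}, \eqref{uM0}) followed by an application of the translation identities in \eqref{cs} and \eqref{BF}. The only point deserving a moment's care is the splitting ${\sf B}_{|u|}(r)=m^{[2]}(r,f)+m^{[2]}(r,1/f)$, which rests on the elementary decomposition $\bigl|\ln|f|\bigr|=\ln^+|f|+\ln^-|f|$; all the genuinely analytic work has already been absorbed into Theorem~\ref{th1}, whose proof is carried out separately.
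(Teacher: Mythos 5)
Your proposal is correct and follows essentially the same route as the paper: substitute $U:=\ln|F|$ and $u:=\ln|f|$, replace the brackets via the identities \eqref{TT} and \eqref{MM}, translate the left-hand sides via \eqref{cs} and \eqref{BF}, and use the decomposition $\bigl|\ln|f|\bigr|=\ln^+|f|+\ln^-|f|$ to get ${\sf B}_{|\ln|f||}(r)=m^{[2]}(r,f)+m^{[2]}(r,1/f)$. No gaps.
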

\begin{proof} For $U:=\ln |F|$, we obtain  \eqref{inDl+lfF} by   \eqref{inDl+}, \eqref{TT}, \eqref{cs}, and also 
\eqref{inDl+0l} by \eqref{inDl+0}, \eqref{TT}, \eqref{BF}. 

For $u:=\ln |f|$ we obtain  \eqref{uMlfF} by   \eqref{uM}, \eqref{MM}, \eqref{cs}, and also 
 \eqref{uM0l} by  \eqref{uM0}, \eqref{MM}, \eqref{BF} since
${m^{[2]}}(r,f)+{m^{[2]}}(r,1/f)\overset{\eqref{BF}}{=}{\sf B}_{|\ln |f||}(r)$ for all $r\in {\mathbb{R}}^+$.
\end{proof}

\section{Lemmata and Proof of Theorem \ref{th1}}\label{Sec3}

\begin{lemma}\label{lem1} Let\/  $0\leq r<R<+\infty$,  $E\subset \overline D(r)$ be $\uplambda$-measurable, $U=u-v$  be a difference of subharmonic  functions $u, v\in {\sf sbh}_* \bigl(\overline D(R)\bigr)$,    
$\varDelta_v$ be the Riesz measure of $v$. Then 
\begin{equation}\label{inDR}
\int_{E} U^+{\,{\rm d}} \uplambda\leq \frac{1}{2}\Bigl(\frac{R+r}{R-r}
{\sf C}_{U^+}(R)+\varDelta_v^{{\text{\tiny\rm rad}}}(R)\Bigr) \uplambda(E)\ln\frac{(10R)^2}{\uplambda(E)}.
\end{equation}
\end{lemma}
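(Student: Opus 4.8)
The plan is to start from the Poisson--Jensen--Privalov representation of the $\delta$-subharmonic function $U=u-v$ on $\overline D(R)$ and to reduce the whole estimate to a single extremal bound for the logarithmic kernel over the small set $E$. Writing $g_R(z,\zeta):=\ln\bigl|\frac{R^2-\bar\zeta z}{R(z-\zeta)}\bigr|\ge 0$ for the Green function of $D(R)$ and $P_R$ for the Poisson integral against the boundary values on $\partial\overline D(R)$, the representation reads
$$
U(z)=P_R[U](z)-\int_{\overline D(R)} g_R(z,\zeta){\,{\rm d}}\varDelta_U(\zeta),\qquad \varDelta_U=\varDelta_u-\varDelta_v .
$$
Discarding the nonpositive term $-\int g_R{\,{\rm d}}\varDelta_U^+$, using $P_R[U]\le P_R[U^+]$, and noting $\varDelta_U^-\le \varDelta_v$ as positive measures, I would obtain the pointwise bound
$$
U^+(z)\le P_R[U^+](z)+\int_{\overline D(R)} g_R(z,\zeta){\,{\rm d}}\varDelta_v(\zeta),\qquad z\in \overline D(r),
$$
the passage to $U^+$ on the left being legitimate since the right side is already nonnegative.

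For the Poisson term I would use that for $|z|\le r<R$ the Poisson kernel is bounded by $\frac{R+r}{R-r}$, so that $P_R[U^+](z)\le \frac{R+r}{R-r}{\sf C}_{U^+}(R)$ and hence $\int_E P_R[U^+]{\,{\rm d}}\uplambda\le \frac{R+r}{R-r}{\sf C}_{U^+}(R)\,\uplambda(E)$. Because $E\subset\overline D(r)$ forces $\uplambda(E)\le\pi R^2$, one has $\ln\frac{(10R)^2}{\uplambda(E)}\ge\ln\frac{100}{\pi}\ge 2$, so this contribution is automatically $\le \frac12\frac{R+r}{R-r}{\sf C}_{U^+}(R)\,\uplambda(E)\ln\frac{(10R)^2}{\uplambda(E)}$, which is exactly the first half of the asserted bound.

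The main work is the Green term. By Tonelli I would swap the order of integration and reduce to estimating, uniformly in $\zeta\in\overline D(R)$, the inner integral $\int_E g_R(z,\zeta){\,{\rm d}}\uplambda(z)$. The elementary inequality $|R^2-\bar\zeta z|\le 2R^2$ gives $g_R(z,\zeta)\le \ln\frac{2R}{|z-\zeta|}$, a kernel that is nonnegative on $E$ and radially decreasing about $\zeta$. The crucial step is a rearrangement (bathtub) argument: among all measurable sets of fixed area $A:=\uplambda(E)$ the integral of such a radially decreasing kernel is maximized on the disc $D(\zeta,\rho)$ with $\pi\rho^2=A$, since the superlevel sets $\{g\ge t\}$ of the majorant are concentric discs. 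A direct computation of $\int_{D(\zeta,\rho)}\ln\frac{2R}{|z-\zeta|}{\,{\rm d}}\uplambda$ then yields $\frac{A}{2}\ln\frac{4\pi e R^2}{A}$, and since $4\pi e<100$ this is $\le \frac{A}{2}\ln\frac{(10R)^2}{A}$. Multiplying by the total mass $\varDelta_v\bigl(\overline D(R)\bigr)=\varDelta_v^{{\text{\tiny\rm rad}}}(R)$ gives the second half of \eqref{inDR}, and adding the two contributions completes the proof.

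I expect the rearrangement step, together with the bookkeeping of the explicit constants so that $4\pi e$ and $\ln(100/\pi)$ fit below $100$ and $2$ respectively, to be the only genuinely delicate point; the representation formula, the Tonelli interchange, and the Poisson-kernel bound are routine, and the trivial case $\uplambda(E)=0$ is handled separately via the convention $t\ln(c/t)\to0$ as $t\to0^+$.
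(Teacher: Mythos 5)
Your proposal is correct and follows essentially the same route as the paper: Poisson--Jensen representation, the Poisson-kernel bound $\frac{R+r}{R-r}{\sf C}_{U^+}(R)$, the majorization $g_R(z,\zeta)\le\ln\frac{2R}{|z-\zeta|}$, Tonelli, and a uniform-in-$\zeta$ estimate of $\int_E\ln\frac{2R}{|z-\zeta|}{\,{\rm d}}\uplambda(z)$ absorbed via $\ln\frac{(10R)^2}{\uplambda(E)}\ge 2$. Your bathtub/rearrangement step is just a repackaging of the paper's layer-cake computation $\int_0^{2R}\uplambda_E(z;t)t^{-1}{\,{\rm d}} t$ with $\uplambda_E(z;t)\le\min\{\pi t^2,\uplambda(E)\}$ split at $t=\sqrt{\uplambda(E)}$ (and in fact yields the slightly sharper intermediate constant $4\pi e$ in place of the paper's $4e^{\pi}$, both below $100$).
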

\begin{proof}%%[Proof] 
For $w\in E\subset \overline D(r)$, by the Poisson\,--\,Jensen formula \cite[4.5]{Rans}, we have
\begin{multline*}
U(w)=\frac{1}{2\pi}\int_0^{2\pi}U(Re^{i\varphi}){\rm Re} \frac{Re^{i\varphi}+w}{Re^{i\varphi}-w}{\,{\rm d}} \varphi
-\int_{D(R)}\ln \Bigl|\frac{R^2-z\bar w}{R(w-z)}\Bigr|{\,{\rm d}} \varDelta_u(z) \\
+\int_{D(R)}\ln \Bigl|\frac{R^2-z\bar w}{R(w-z)}\Bigr|{\,{\rm d}} \varDelta_v(z)  
\leq \frac{R+r}{R-r}{\sf C}_{U^+}(R) +\int_{D(R)}\ln \frac{2R}{|w-z|}{\,{\rm d}} \varDelta_v(z) 
\end{multline*}
where the right-hand side of the inequality is positive. Hence, by integrating, we get 
\begin{multline}\label{intU}
\int_{E} U^+{\,{\rm d}} \uplambda \leq \frac{R+r}{R-r}{\sf C}_{U^+}(R)\uplambda(E)+
\int_{D(R)}\int_E\ln\frac{2R}{|w-z|}{\,{\rm d}} \uplambda (w){\,{\rm d}} \varDelta_v(z)\\
\leq \frac{R+r}{R-r}{\sf C}_{U^+}(R)\uplambda(E)+
\varDelta_v\bigl(\overline D(R)\bigr)
\int_E\ln\frac{2R}{|w-z|}{\,{\rm d}} \uplambda (w). 
\end{multline}
Denote by  $\uplambda_E$ the restriction of the Lebesgue measure  $\uplambda$ to $\uplambda$-measurable set $E\subset \overline D(r)$. 
Obviously, 
\begin{subequations}\label{lE1}
\begin{align}
{\rm supp\,} \uplambda_E&\subset \overline D(r)\subset D(R),
\tag{\ref{lE1}s}\label{{lE}s}\\
\uplambda_E(w,t)&\overset{\eqref{{murad}z}}{\leq} \pi t^2\quad\text{for each $w\in {\mathbb{C}}$ and $t\in {\mathbb{R}}^+$}, 
\tag{\ref{lE1}t}\label{{lE}t}\\
  \uplambda_E ({\mathbb{C}})&=\uplambda (E)\leq \pi r^2\leq \pi R^2.
\tag{\ref{lE1}E}\label{{lE}E}
\end{align}
\end{subequations} 
Consider the last integral in \eqref{intU}:
\begin{equation}\label{lI}
\int_E\ln\frac{2R}{|w-z|}{\,{\rm d}} \uplambda (w) =\int_{{\mathbb{C}}} \ln\frac{2R}{|w-z|}{\,{\rm d}} \uplambda_E (w)
\overset{\eqref{{lE}s}}{=}\int_0^{2R} \ln \frac{2R}{t}{\,{\rm d}} \uplambda_E (z;t)
\overset{\eqref{{lE}t}}{=}
\int_0^{2R} \frac{\uplambda_E (z;t)}{t}{\,{\rm d}} t.
\end{equation}
 By \eqref{{lE}E} we  have $\sqrt{\uplambda (E)}\overset{\eqref{{lE}E}}{\leq} \sqrt{\pi}r< 2r \leq 2R$.
From here we can split the last integral in \eqref{lI} into the sum of two positive integrals:
\begin{equation}\label{=E}
\int_0^{2R} \frac{\uplambda_E (z;t)}{t}{\,{\rm d}} t=
\int_0^{\sqrt{\uplambda (E)}} \frac{\uplambda_E (z;t)}{t}{\,{\rm d}} t+\int_{\sqrt{\uplambda (E)}}^{2R} \frac{\uplambda_E (z;t)}{t}{\,{\rm d}} t
\end{equation}
Using \eqref{{lE}t}, we have for first integral on the right-hand  side of this equality \eqref{=E} the estimate
\begin{equation}\label{=E1}
\int_0^{\sqrt{\uplambda (E)}} \frac{\uplambda_E (z;t)}{t}{\,{\rm d}} t\overset{\eqref{{lE}t}}{\leq} 
\int_0^{\sqrt{\uplambda (E)}} \frac{\pi t^2}{t}{\,{\rm d}} t=\frac{\pi}{2}\uplambda (E). 
\end{equation}
Using \eqref{{lE}E}, we have for second integral on the right-hand  side of  \eqref{=E} the estimate
\begin{equation}\label{=E2}
\int_{\sqrt{\uplambda (E)}}^{2R} \frac{\uplambda_E (z;t)}{t}{\,{\rm d}} t\leq 
\uplambda({\mathbb{C}})\int_{\sqrt{\uplambda (E)}}^{2R} \frac{\uplambda_E (z;t)}{t}{\,{\rm d}} t
\overset{\eqref{{lE}E}}{=} 
\frac12\uplambda(E) \ln \frac{4R^2}{\uplambda (E)}. 
\end{equation}
Thus, it follows from \eqref{=E1}--\eqref{=E2} and \eqref{=E} that
\begin{equation}\label{=E0}
\int_0^{2R} \frac{\uplambda_E (z;t)}{t}{\,{\rm d}} t\leq 
\frac{\pi}{2}\uplambda (E)+\frac12\uplambda(E) \ln \frac{4R^2}{\uplambda (E)} =
\frac{1}{2}\uplambda (E)\ln \frac{4e^{\pi}R^2}{\uplambda(E)}\leq 
\frac{1}{2}\uplambda (E)\ln \frac{(10R)^2}{\uplambda(E)},
\end{equation}
where
\begin{equation*}
\ln \frac{(10R)^2}{\uplambda(E)}\geq 2, \text{ since } \uplambda(E)\overset{\eqref{{lE}E}}{\leq} \pi R^2,
\end{equation*}
and estimate \eqref{=E0} together with \eqref{lI} and \eqref{intU} gives \eqref{inDR}.    
\end{proof}

\begin{lemma}[{\cite[Lemma 1]{KhaMst}}]\label{lem2} Let $\mu\in {\sf Meas\,}^+\bigl(\overline D(R)\bigr)$. Then
\begin{equation}\label{N}
\mu^{{\text{\tiny\rm rad}}}(r)\leq  \frac{R}{R-r}{\sf N}_{\mu}(r,R)\quad \text{for each $0\leq r\leq R$.}
\end{equation} 
\end{lemma}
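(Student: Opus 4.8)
The plan is to derive \eqref{N} from the monotonicity of the radial mass function \eqref{{murad}m} together with a single elementary inequality for the logarithm; no analytic machinery beyond the definition \eqref{{murad}N} of ${\sf N}_{\mu}$ is needed.

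First I would observe that, since $\mu\geq 0$ and the closed discs $\overline D(t)$ increase with $t$, the radial function $t\mapsto \mu^{{\text{\tiny\rm rad}}}(t)=\mu\bigl(\overline D(t)\bigr)$ is non-decreasing on $[0,R]$. In particular $\mu^{{\text{\tiny\rm rad}}}(t)\geq \mu^{{\text{\tiny\rm rad}}}(r)$ for every $t\in[r,R]$. Inserting this lower bound into the defining integral \eqref{{murad}N}, I would estimate
\[
{\sf N}_{\mu}(r,R)=\int_r^R\frac{\mu^{{\text{\tiny\rm rad}}}(t)}{t}{\,{\rm d}} t\geq \mu^{{\text{\tiny\rm rad}}}(r)\int_r^R\frac{{\,{\rm d}} t}{t}=\mu^{{\text{\tiny\rm rad}}}(r)\ln\frac{R}{r},\qquad 0<r\leq R.
\]

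The final step is to weaken $\ln(R/r)$ by the elementary bound $\ln y\geq 1-1/y$ (valid for $y\geq 1$, with equality at $y=1$), applied with $y=R/r\geq 1$; this gives $\ln(R/r)\geq 1-r/R=(R-r)/R$. Combining the two displays yields ${\sf N}_{\mu}(r,R)\geq \frac{R-r}{R}\,\mu^{{\text{\tiny\rm rad}}}(r)$, and dividing by the positive factor $(R-r)/R$ produces \eqref{N}.

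I expect no genuine obstacle here, since the substantive content is entirely the combination of monotonicity of $\mu^{{\text{\tiny\rm rad}}}$ with the estimate $\ln(R/r)\geq(R-r)/R$; the intermediate inequality $\mu^{{\text{\tiny\rm rad}}}(r)\leq {\sf N}_{\mu}(r,R)/\ln(R/r)$ is in fact sharper, so the stated form trades a little sharpness for a more convenient algebraic factor. The only points needing a word are the boundary cases. For $r=0$ the factor $R/(R-r)$ equals $1$: if $\mu(\{0\})>0$ then ${\sf N}_{\mu}(0,R)=+\infty$ and the bound is trivial, while if $\mu(\{0\})=0$ both sides vanish. The endpoint $r=R$ is read with $R/(R-r)=+\infty$, so the inequality is again vacuous.
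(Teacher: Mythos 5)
Your proof is correct, and it is the standard argument: the paper itself states this lemma without proof, importing it from \cite[Lemma~1]{KhaMst}, where the same combination of the monotonicity of $t\mapsto\mu^{{\text{\tiny\rm rad}}}(t)$ with an elementary lower bound for $\int_r^R t^{-1}{\,{\rm d}} t$ is used (one can even skip the logarithm entirely by writing $\mu^{{\text{\tiny\rm rad}}}(r)\frac{R-r}{R}=\mu^{{\text{\tiny\rm rad}}}(r)\int_r^R\frac{{\,{\rm d}} t}{R}\leq\int_r^R\frac{\mu^{{\text{\tiny\rm rad}}}(t)}{t}{\,{\rm d}} t$, since $1/t\geq 1/R$ on $[r,R]$). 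The only inaccuracy is your remark that for $r=0$ with $\mu(\{0\})=0$ ``both sides vanish'': the right-hand side ${\sf N}_{\mu}(0,R)$ need not be zero (take $\mu=\uplambda$ restricted to $\overline D(R)$), but the inequality is still trivial there because the left-hand side is $0$.
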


\begin{lemma}\label{lem3} Let  $0< r<+\infty$, $0<b\in {\mathbb{R}}^+$,  $E\subset \overline D(r)$ be $\uplambda$-measurable, $U=u-v$  be a difference of subharmonic  functions $u, v\in {\sf sbh}_* \Bigl(\overline D\bigl((1+b)^2r\bigr)\Bigr)$. Then 
\begin{equation}\label{inDl}
\int_{E} U^+{\,{\rm d}} \uplambda \leq
%%% 2 умнож на 1\2
\frac{2+b}{2b}
\Bigl(
{\sf C}_{U^+}\bigl((1+b)r\bigr)
+{\sf N}_{\varDelta_v}\bigl((1+b)r, (1+b)^2r\bigr)\Bigr)
\uplambda(E)\ln\frac{\bigl(10(1+b)r\bigr)^2}{\uplambda(E)}. 
\end{equation}
\end{lemma}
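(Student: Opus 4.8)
The plan is to obtain \eqref{inDl} by applying Lemma~\ref{lem1} with the outer radius chosen as $R:=(1+b)r$, and then to control the radial term $\varDelta_v^{{\text{\tiny\rm rad}}}(R)$ that this produces by means of Lemma~\ref{lem2}. First I would verify the hypotheses of Lemma~\ref{lem1}: since $0<b$ we have $r<(1+b)r=R<+\infty$, the set $E\subset\overline D(r)$ is $\uplambda$-measurable by assumption, and restricting $u,v\in{\sf sbh}_*\bigl(\overline D((1+b)^2r)\bigr)$ to the smaller disc $\overline D(R)=\overline D\bigl((1+b)r\bigr)$ (which is legitimate because $(1+b)r<(1+b)^2r$) gives $u,v\in{\sf sbh}_*\bigl(\overline D(R)\bigr)$. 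Thus Lemma~\ref{lem1} applies, and in its conclusion \eqref{inDR} the elementary ratio simplifies to $\frac{R+r}{R-r}=\frac{(1+b)r+r}{(1+b)r-r}=\frac{(2+b)r}{br}=\frac{2+b}{b}$, while the logarithmic factor becomes $\ln\frac{(10R)^2}{\uplambda(E)}=\ln\frac{\bigl(10(1+b)r\bigr)^2}{\uplambda(E)}$, which is already exactly the logarithm appearing on the right of \eqref{inDl}.

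Next I would estimate the radial term $\varDelta_v^{{\text{\tiny\rm rad}}}\bigl((1+b)r\bigr)$. Since $v$ is subharmonic on $\overline D\bigl((1+b)^2r\bigr)$, its Riesz measure satisfies $\varDelta_v\in{\sf Meas\,}^+\bigl(\overline D((1+b)^2r)\bigr)$, so Lemma~\ref{lem2} applies with inner radius $(1+b)r$ and outer radius $(1+b)^2r$. Here the ratio $\frac{R}{R-r}$ of \eqref{N} simplifies to $\frac{(1+b)^2r}{(1+b)^2r-(1+b)r}=\frac{(1+b)^2r}{(1+b)r\,b}=\frac{1+b}{b}$, which yields $\varDelta_v^{{\text{\tiny\rm rad}}}\bigl((1+b)r\bigr)\le\frac{1+b}{b}\,{\sf N}_{\varDelta_v}\bigl((1+b)r,(1+b)^2r\bigr)$.

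Finally I would substitute this estimate into the output of Lemma~\ref{lem1}. Pulling the factor $\tfrac12$ through, the coefficient in front of ${\sf C}_{U^+}\bigl((1+b)r\bigr)$ is $\frac12\cdot\frac{2+b}{b}=\frac{2+b}{2b}$, and the coefficient in front of ${\sf N}_{\varDelta_v}\bigl((1+b)r,(1+b)^2r\bigr)$ is $\frac12\cdot\frac{1+b}{b}=\frac{1+b}{2b}$. To reach the symmetric form \eqref{inDl} I would observe that $\frac{1+b}{2b}\le\frac{2+b}{2b}$ and that ${\sf N}_{\varDelta_v}\ge 0$ (it takes values in $\overline{\mathbb{R}}^+$), so the smaller coefficient may be raised to the common value $\frac{2+b}{2b}$ without decreasing the right-hand side, giving exactly \eqref{inDl}. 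I do not anticipate a genuine obstacle: the argument is a direct chaining of Lemmas~\ref{lem1} and~\ref{lem2} with purely elementary arithmetic on the two ratios. The only step requiring a moment's care is this last coefficient-raising, which relies essentially on the nonnegativity of the counting term ${\sf N}_{\varDelta_v}$.
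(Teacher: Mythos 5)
Your proposal is correct and follows exactly the paper's own argument: Lemma~\ref{lem1} with $R:=(1+b)r$, then Lemma~\ref{lem2} with outer radius $(1+b)^2r$ and inner radius $(1+b)r$, and finally raising the coefficient $\frac{1+b}{2b}$ of the nonnegative term ${\sf N}_{\varDelta_v}$ to the common value $\frac{2+b}{2b}$. Nothing is missing.
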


\begin{proof}[Proof] By Lemma \ref{lem1} 
with  $R:=(1+b)r$ we have 
\begin{equation*}
\int_{E} U^+{\,{\rm d}} \uplambda \leq \frac{1}{2}\Bigl(\frac{2+b}{b}
{\sf C}_{U^+}(R)+\varDelta_v^{{\text{\tiny\rm rad}}}\bigl((1+b)r\bigr)\Bigr) \uplambda(E)\ln\frac{\bigl(10(1+b)r\bigr)^2}{\uplambda(E)}.
\end{equation*}
Hence, by Lemma \ref{lem2} with $(1+b)^2r$ instead of $R$ and $(1+b)r$ instead of $r$, we obtain
\begin{equation*}
\int_{E} U^+{\,{\rm d}} \uplambda \leq \frac{1}{2}\frac{2+b}{b}
\Bigl({\sf C}_{U^+}\bigl((1+b)r\bigr)
+{\sf N}_{\varDelta_v}\bigl((1+b)r,(1+b)^2r\bigr) \Bigr) 
\uplambda(E)\ln\frac{\bigl(10(1+b)r\bigr)^2}{\uplambda(E)},
\end{equation*}
 which gives \eqref{inDl}.
\end{proof}
\begin{proof}[Proof of Theorem\/ {\rm  \ref{th1}}] 
We can assume that $U=u-v$ is the canonical representation of $U$. 
Consider a number $b>0$ such that $(1+b)^2=k$. By Lemma \ref{lem3}, we have
\begin{multline*}
\int_{E}U^+{\,{\rm d}} \uplambda \leq
\frac12\frac{\sqrt{k}+1}{\sqrt{k}-1}
\Bigl( {\sf C}_{U^+}(\sqrt{k}r)
+{\sf N}_{\varDelta_v}(\sqrt{k}r, kr)\Bigr)
\uplambda(E)\ln\frac{\bigl(10\sqrt{k}r\bigr)^2}{\uplambda(E)}
\\ 
\leq \frac{k}{k-1}
\Bigl( \underset{{\sf T}_{U}(r_0, \sqrt{k}r)}{\underbrace{{\sf C}_{U^+}(r_0, \sqrt{k}r)+{\sf N}_{\varDelta_v}(r_0, \sqrt{k}r)}}+{\sf C}_{U^+}(r_0)
+\underset{{\sf T}_{U}(r_0, kr)}{\underbrace{{\sf C}_{U^+}(r_0, kr)+{\sf N}_{\varDelta_v}(r_0, kr)}}\Bigr)\uplambda(E)\ln\frac{\bigl(10r\bigr)^2k}{\uplambda(E)}\\
\leq 
\frac{k}{k-1}
\Bigl( 2{\sf T}_{U}(r_0, kr)+{\sf C}_{U^+}(r_0)\Bigr)\uplambda(E)\ln\frac{\bigl(10r\bigr)^2k}{\uplambda(E)}
\leq \frac{2k}{k-1}
\Bigl( {\sf T}_{U}(r_0, kr)+{\sf C}_{U^+}(r_0)\Bigr)\uplambda(E)\ln\frac{\bigl(100kr^2}{\uplambda(E)}
\end{multline*}
and obtain  \eqref{inDl+}. 

If $u\in {\sf sbh}_*({\mathbb{C}})$, then the function  ${\sf M}_u^+$ is increasing, and
\begin{equation}\label{M+E}
\int_{E} u^+{\,{\rm d}} \uplambda \overset{\eqref{{Cu}M}}{\leq}
{\sf M}_{u^+}(r) \uplambda (E) \quad \text{for $E\subset \overline D(r)$.}
\end{equation}
For  $U_u:=0-u$, the difference  $0-u$ is the canonical representation of $\delta$-subharmonic non-trivial function $U_u$ and  we have
\begin{equation}\label{TCM}
{\sf T}_{U_u}(r,R)\overset{\eqref{T}}{=}  {\sf C}_{\sup\{0,u\}}(r,R)={\sf C}_{u^+}(r,R)\leq  {\sf C}_{u^+}(R)
\leq {\sf M}_{u^+}(R).
\end{equation}
Hence, by Theorem \ref{th1} in part \eqref{inDl+} for $U_u$ in the role of $U$, we obtain  
   
\begin{multline*}
\int_{E} (-u)^+{\,{\rm d}} \uplambda \overset{\eqref{inDl+}}{\leq}
\frac{2k}{k-1} \Bigl({\sf T}_{U_u}(r_0,kr)+{\sf C}_{U_u^+}(r_0)\Bigr)
\uplambda(E) \ln\frac{100kr^2}{\uplambda (E)}
\\
\overset{\eqref{TCM}}{\leq}
\frac{2k}{k-1} \Bigl({\sf M}_{u^+}(kr)+{\sf C}_{(-u)^+}(r_0)\Bigr)\uplambda(E) \ln\frac{100kr^2}{\uplambda (E)}. 
\end{multline*}
The latter together with \eqref{M+E} gives \eqref{uM}. 
\end{proof}

%%\section{Conclusions \& Future Work}

\begin{acknowledgments}
 The work was supported by a Grant of the Russian Science Foundation
(Project No. 18-11-00002).
%%The research  is funded in the framework of executing the development program of Scientific Educational Mathematical %%Center of Volga  Federal District by additional agreement no. 075-02-2020-1421/1 to agreement no. 075-02-2020-142.
\end{acknowledgments}

%%{\large \bf Compliance with ethical standards}
%%\paragraph*{\bf Conflict of interest.}
%%The authors declare that they have no conflict of interest.

%\References:

\end{document}